\newtheorem{theorem}{Theorem}[section]
\newtheorem{proposition}[theorem]{Proposition}
\newtheorem{definition}[theorem]{Definition}
\newtheorem{remark}[theorem]{Remark}
\def\sqr#1#2{\vbox{\hrule height .#2pt
\hbox{\vrule width .#2pt height #1pt \kern #1pt
\vrule width .#2pt}\hrule height .#2pt }}
\def\begi{\begin{itemize}}
\def\endi{\end{itemize}}
\def\bega{\begin{array}}
\def\enda{\end{array}}
\def\forall{\hbox{for every~ }}
\def\ds{\displaystyle}
\def\R{\mathbb{R}}
\def\forall{\hbox{for all}~}
\def\C{{\mathcal C}}
\def\bel{\begin{equation}\label}
\def\eeq{\end{equation}}
\begin{document}

\title{Lagrangian-based Hydrodynamic Model:\\
Freeway Traffic Estimation}
\author{Ke Han$^{a}\thanks{Corresponding author, e-mail: kxh323@psu.edu}$
\qquad Tao Yao$^{b}\thanks{e-mail: tyy1@engr.psu.edu}$
\qquad Terry L. Friesz$^{b}\thanks{e-mail: tfriesz@psu.edu}$
 \\\\
$^{a}$\textit{Department of Mathematics}\\
\textit{Pennsylvania State University, University Park, PA 16802, USA}\\
$^{b}$\textit{Department of  Industrial and Manufacturing Engineering}\\
\textit{Pennsylvania State University, University Park, PA 16802, USA}}
\date{}
\maketitle

\begin{abstract}
This paper is concerned with highway traffic estimation using traffic sensing data, in a Lagrangian-based modeling framework. We consider the Lighthill-Whitham-Richards (LWR) model \citep{LW, Richards} in Lagrangian-coordinates, and provide rigorous mathematical results  
 regarding the equivalence of viscosity solutions to the Hamilton-Jacobi equations in Eulerian and Lagrangian coordinates. We derive closed-form solutions to the Lagrangian-based Hamilton-Jacobi equation using the Lax-Hopf formula \citep{HJDaganzo, ABSP}, and discuss issues of fusing traffic data of various types into the Lagrangian-based H-J equation. A numerical study of the {\it Mobile Century} field experiment \citep{MC} demonstrates the unique modeling features and insights provided by the Lagrangian-based approach. 
\end{abstract}

\section{Introduction}

\subsection{General background}

Highway traffic estimation and prediction are of pivotal importance for traffic operation and management. From an estimation perspective, it is desirable to have a substantial amount of information available. This has been made possible by emerging technology in fixed and mobile sensing and by the advancement of cyber-physical infrastructure, see \cite{MC} for a specific discussion. Depending on the characteristics of sensing devices and the underlying modeling framework, traffic sensing can be categorized as Eulerian sensing (such as using loop detector, virtual tripwire, and video camera) and Lagrangian sensing (such as using GPS and smart phone).

This paper is concerned with using the Lighthill-Whitham-Richards model  to estimate traffic states, subject to constraints imposed by sensing data that are large in quantity, and high in dimension. The LWR model is described by a {\it partial differential equation} (PDE), more specifically, by a {\it scalar conservation law} (SCL).  For general mathematical background on SCL, the reader is referred to \cite{Bbook}.  The LWR SCL is closely related to a Hamilton-Jacobi equation, via the Moskowitz function \citep{Moskowitz}, also known as the Newell curve \citep{Newell}. It is shown in \cite{Evans} that a weak entropy solution to the LWR SCL leads to the viscosity solution of the corresponding H-J equation; and vise versa.

\subsection{Viability theory and Lax-Hopf formula}
From a PDE perspective, additional information on the traffic stream provided by sensors imposes multiple initial/boundary and/or internal boundary conditions that may lead to non-existence of a well-defined solution, namely, entropy solution to the scalar conservation law; or viscosity solution to the H-J equation. Recently, \cite{ABSP} extended the viscosity solution of H-J equations to a solution class known as the Barron-Jensen/Frankowska solutions \citep{BJ, Frankowska}. The generalized solution, called viability episolution \citep{Aubin}, is lower semi-continuous, and satisfies the initial/boundary and/or internal boundary conditions in an inequality sense. Such a relaxation, while excluding some nice mathematical properties of the solution, does allow the PDE to incorporate multiple value conditions and produce solutions that are well-informed of traffic states transmitted by the sensors. Based on this idea, a sequence of seminal papers, \cite{CC1, CC2, Convex} were created, which performed highway traffic estimation as well as data assimilation and inverse modeling.

\subsection{The LWR model in Lagrangian coordinate}

The LWR model is commonly formulated as a scalar conservation law in {\it Eulerian coordinate} (EC), i.e. with independent variables $t$ (time) and $x$ (location). A list of selected references in this line of research includes \cite{BH, BH1, CC1, CC2, HJDaganzo, HJDaganzo2, LWRDUE, Newell}. Another way of formulating the traffic dynamic is through {\it Lagrangian coordinate} (LC), which identifies $t$ (time) and $n$ (vehicle label). In contrast to EC, LC describes time evolution of traffic quantities associated with a moving vehicle.  Such a framework was initially introduced by \cite{CF} in the context of gas dynamics. LC was introduced to traffic flow modeling by \cite{HJDaganzo2}, and subsequently studied by \cite{Leclercq} and \cite{Yuan}. A more detailed review of the LWR model in both EC and LC is presented later in Section \ref{LWREL}.

The LWR PDE in LC can be derived from the Eulerian-based PDE; and vise versa. This procedure was given an intuitive illustration in \cite{HJDaganzo2}, provided that all quantities of interest were continuously differentiable. In \cite{Leclercq}, the authors related the equivalence of EC- and LC-based PDEs to an earlier work \citep{Wagner} on gas dynamics. However, we point out that \cite{Wagner} studied an Euler equation, which is a system of conservation laws; results regarding such a system do not apply immediately to the scalar conservation law of interest. In this paper, we provide a rigorous proof of the equivalence between the two systems, in the context of Hamilton-Jacobi equation and viscosity solutions. The result will apply to the LWR conservation laws even if the solutions are not continuous, let alone differentiable. Technical result is presented in Section \ref{LCequivalence}.

As mentioned before,  solution to the H-J equation has been extended to treat multiple value conditions, using viability theory \citep{Aubin}. In this paper, we apply the framework of viability theory to the Lagrangian-based Hamilton-Jacobi equations. The goal is to explore the unique modeling and computational advantages of the LC in highway traffic estimation. In particular, using the Lax-Hopf formula, we derive explicit solutions to the LC-based H-J equation with various value conditions including initial/intermediate condition, upstream/downstream boundary condition, internal boundary condition, and a combination of the above. This is presented in Section \ref{simplesol}. As pointed out by recent studies \cite{Leclercq} and \cite{Yuan}, LC-based conservation law has certain computational advantage over EC-based  ones, in the context of Godunov scheme \citep{Godunov}. Namely, the Godunov scheme reduces to an upwind scheme for LC-based conservation law, since the corresponding fundamental diagram is monotonically increasing. As we show in this paper, when solved with the Lax-Hopf formula, the LC-based H-J equation also reduces the computational complexity, compared with the EC-based H-J equation. Finally, we test the framework of Lagrangian-based traffic sensing and estimation with dataset obtained from the Mobile Century field experiment \citep{MC}. The Lagrangian-based approach provides traffic information associated with each moving vehicle, such as trajectory and speed. Such information are relatively difficult to obtain from the Eulerian-based approach: one has to perform partial inversion of the Moskowitz function.

\subsection{Organization}

The rest of this paper is organized as follows. In Section \ref{LWREL}, we briefly review the Lighthill-Whitham-Richards model in both Eulerian and Lagrangian coordinates.  A rigorous equivalence result regarding the EC-based and LC-based H-J equations is presented in Section \ref{LCequivalence}.  Section \ref{BJF} introduces the notion of viability episolution and the generalized Lax-Hopf formula. Data of different sources are discussed and integrated into the Lagrangian PDE. In section \ref{simplesol}, we derive explicit solutions to the Lagrangian-based H-J equation, using the Lax-Hopf formula.  In Section \ref{numerical}, we apply the Lagrangian-based computational paradigm to estimate real-world highway traffic stream.

\section{The LWR model in transformed coordinates}\label{LWREL}
In this section, we review the Lighthill-Whitham-Richards model in both Eulerian and Lagrangian coordinates. The transformation between these two coordinate systems is made through a function inversion under minor assumption.  In section \ref{LCequivalence}, rigorous result on the equivalence of the two systems is provided, using Hamilton-Jacobi equation and the notion of viscosity solution.

\subsection{The LWR model in Eulerian coordinates}\label{LCEulerian}
The classical LWR PDE reads
\bel{LWRE}
 \rho_t(t,\,x)+ f_x\big(\rho(t,\,x)\big)~=~0\qquad (t,\,x)\in[0,\,+\infty)\times[0,\,L]
\eeq
where the subscripts denote partial derivative. The scalar conservation law above describes the temporal-spatial evolution of average vehicle density $\rho(t,\,x)$ and average vehicle flow $f\big(\rho(t,\,x)\big)$. The density-flow relation is articulated by the {\it fundamental diagram} $f(\cdot)$
\bel{flow}
f\big(\rho\big)~=~\rho\,v(\rho)\qquad \rho\in[0,\,\rho_{max}]
\eeq
where $\rho_{max}$ is jam density. The average vehicle speed $v(\rho)\in[0,\,v_{max}]$ is a decreasing function of density; $v_{max}$ denotes the free flow speed. The fundamental diagram is assumed to be concave with maximum $M$ attained at $\rho^*$, where $M$ is the flow capacity.

We introduce the Moskowitz function $N(\cdot\,,\,\cdot)$, also know as the Newell-curve, such that 
\bel{identityxt}
N_t(t,\,x)~=~f\big(\rho(t,\,x)\big),\qquad N_x(t,\,x)~=~-\rho(t,\,x)\qquad \hbox{a.e.}
\eeq
Hereafter, ``a.e." stands for ``almost everywhere". $N(t,\,x)$ measures the cumulative number of vehicles that have passed location $x$ by time $t$. It has been shown in a number of circumstances that if $\rho(t,\,x)$ is the weak entropy solution to (\ref{LWRE}), then the corresponding Moskowitz function is the  viscosity solution to the following Hamilton-Jacobi equation
\bel{HJE}
N_t(t,\,x)-f\big(- N_x(t,\,x)\big)~=~0
\eeq

Note that viscosity solution to (\ref{HJE}) must be Lipschitz continuous, but not necessarily continuously differentiable due to the presence of shock waves in  $\rho(t,\,x)$. There exists, however, more general solution classes such as the viability episolution \citep{ABSP}, which is lower semi-continuous.

\subsection{The LWR model in Lagrangian coordinates }\label{LCLagrangian}

In Lagrangian coordinate system, a free variable $n$ is used to identify a particular vehicle. Note that in a continuum model, $n$ is treated as a real number.  The coordinate transformation from $(t,\,x)$ to $(t,\,n)$ is made by inverting the following  Moskowitz function 
\bel{N}
n~=~N(t,\,x)
\eeq
 Throughout  this article, we assume that the vehicle density is uniformly positive, i.e. there exists $\delta>0$ such that
\begin{equation}\label{reveqn1}
\rho(t,\,x)~\geq~\delta\qquad\forall~(t,\,x)\in[0,\,+\infty)\times[0,\,L]
\end{equation}
This assumption is not restrictive given the argument that  if the density vanishes in certain road segments, then the domain of study can be separated into several subdomains, with each one satisfying (\ref{reveqn1}).  

For each $t$ fixed,  (\ref{reveqn1}) implies that $N(t,\,\cdot)$ is a strictly decreasing function of  $x$, whose inverse is denoted by $X(t,\,\cdot)$. We have
\bel{X}
x~=~X(t,\,n)
\eeq
where $X(t,\,n)$ represents the location of vehicle labeled $n$ at time $t$. Identities (\ref{N}) and (\ref{X}) define the coordinate transformation. We have the following
\bel{identitynt}
X_t(t,\,n)~=~v(t,\,n),\qquad X_n(t,\,n)~=~-s(t,\,n)\qquad \hbox{a.e.}
\eeq
where $v(t,\,n)$ and $s(t,\,n)$ denote the average speed and spacing around vehicle $n$ at time $t$, respectively. Introducing the spacing-velocity relationship 
\bel{psidef}
v~=~\psi(s)~\doteq~f\left({1\over s}\right)\cdot s\in[0,\,v_{max}]\qquad s\in[1/\rho_{max},\, 1/\delta]
\eeq
we present the Hamilton-Jacobi equation in Lagrangian coordinate
\bel{HJnt}
X_t(t,\,n)-\psi\big(- X_n(t,\,n)\big)~=~0
\eeq

\subsection{Equivalence between (\ref{HJE}) and (\ref{HJnt})}\label{LCequivalence}
In this section, we provide rigorous mathematical analysis on the relationship between the two H-J equations in EC and in LC.  Our strategy is to show that if $N(t,\,x)$ is the viscosity solution to (\ref{HJE}), then its partial inverse as defined by (\ref{N}) and (\ref{X}), is the viscosity to (\ref{HJnt}). The converse will hold similarly. We begin with the definition of viscosity solution to Hamilton-Jacobi equation of the form
\bel{HJgeneral}
u_t+H(\nabla u)~=~0
\eeq
where the unknown $u(t,\,x)\in\R^m$; $\nabla u$ is the gradient of u with respect to $x$. In what follows, $C$, $C^1$ denotes the set of continuous and continuously differentiable functions, respectively.
\begin{definition}\label{visdef}
A function $u\in C(\Omega)$ is a  viscosity subsolution of (\ref{HJgeneral}) if, for every  $C^1$ function $\varphi=\varphi(t,\,x)$ such that $u-\varphi$ has a local maximum at $(t,\,x)$, there holds
\bel{subsol}
\varphi_t(t,\,x)+H(\nabla \varphi)~\leq~0
\eeq
Similarly, $u\in C(\Omega)$ is a viscosity supersolution of (\ref{HJgeneral}) if, for every $C^1$ function $\varphi=\varphi(t,\,x)$ such that $u-\varphi$ has a local minimum at $(t,\,x)$, there holds
\bel{supsol}
\varphi_t(t,\,x)+H(\nabla \varphi)~\geq~0
\eeq
We say that $u$ is a viscosity solution of (\ref{HJgeneral}) if it is both a supersolution and a subsolution in the viscosity sense.
\end{definition}

\begin{remark}
  If $u$ is a $C^1(\Omega)$ function and satisfies (\ref{HJgeneral}) at every $x\in\Omega$, then $u$ is also a solution in the viscosity sense. Conversely, if $u$ is a viscosity solution, then the equality must hold at every point $x$ where $u$ is differentiable. In particular, if $u$ is Lipschitz continuous, then it is almost everywhere differentiable, hence (\ref{HJgeneral}) holds almost everywhere in $\Omega$. 
\end{remark}

The next theorem  establishes the equivalence between viscosity solutions to (\ref{eqn1}) and (\ref{eqn2}), where $f(\cdot)$ and $\psi(\cdot)$ satisfy (\ref{psidef}).
\bel{eqn1}
N_t(t,\,x)-f\Big(-N_x(t,\,x)\Big)~=~0
\eeq
\bel{eqn2}
X_t(t,\,n)-\psi\Big(-X_n(t,\,n)\Big)~=~0
\eeq

\begin{theorem}\label{viscositythm}
Assume that $N(t,\,x), \, (t,\,x)\in\Omega\subset (-\infty,\,+\infty)\times\R^n$, is a viscosity solution to (\ref{eqn1}), furthermore, assume that the density is uniformly positive, i.e. $\rho(t,\,x)\geq \delta>0,\,\forall (t,\,x)\in\Omega$. Then function $X(t,\,\cdot)$ obtained by inverting $N(t,\,\cdot)$ is a viscosity solution to (\ref{eqn2}).
\end{theorem}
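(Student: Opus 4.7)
The plan is to show that $X$ satisfies both the sub- and supersolution conditions of Definition \ref{visdef}, by pulling test functions for $X$ on the $(t,n)$-domain back to test functions for $N$ on the $(t,x)$-domain via partial inversion in the spatial variable. In the smooth regime the transformation is transparent: differentiating $n = N(t, X(t,n))$ yields $X_t = -N_t/N_x$ and $X_n = 1/N_x$, so substituting $N_t = f(-N_x)$ and using the identity $\psi(s) = s f(1/s)$ from (\ref{psidef}) immediately produces $X_t = \psi(-X_n)$. The viscosity argument is an upgrade of this calculation through $C^1$ test functions.

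For the subsolution case, let $\varphi \in C^1$ be such that $X - \varphi$ attains a local maximum at $(t_0, n_0)$. After subtracting a constant from $\varphi$, I may assume $X(t_0, n_0) = \varphi(t_0, n_0)$, and I set $x_0 = X(t_0, n_0)$, so $n_0 = N(t_0, x_0)$. A preliminary step establishes Lipschitz bounds on $X$ from (\ref{identitynt}) and (\ref{reveqn1}): since $N$ is Lipschitz with $N_x = -\rho \in [-\rho_{max}, -\delta]$, its partial inverse satisfies $X_n \in [-1/\delta, -1/\rho_{max}]$ almost everywhere, and similarly $X_t \in [0, v_{max}]$. Restricting the local-maximum inequality to the slice $t = t_0$ and comparing difference quotients of $n \mapsto X(t_0, n)$ against those of $n \mapsto \varphi(t_0, n)$ then forces $\varphi_n(t_0, n_0) \in [-1/\delta, -1/\rho_{max}]$; in particular $\varphi_n(t_0, n_0) < 0$.

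Now I invert $\varphi$ in its spatial argument: since $\varphi_n$ is continuous and strictly negative at $(t_0, n_0)$, the implicit function theorem produces a $C^1$ function $\phi(t, x)$ on a neighborhood of $(t_0, x_0)$ satisfying $n = \phi(t, \varphi(t, n))$, with derivatives $\phi_t = -\varphi_t/\varphi_n$ and $\phi_x = 1/\varphi_n$. The key claim is that $N - \phi$ has a local maximum at $(t_0, x_0)$. Indeed, from $X(t, n) \leq \varphi(t, n)$ near $(t_0, n_0)$, applying the strictly decreasing function $N(t, \cdot)$ yields $n = N(t, X(t, n)) \geq N(t, \varphi(t, n))$; the substitution $x = \varphi(t, n)$, $n = \phi(t, x)$ then translates this into $N(t, x) \leq \phi(t, x)$ near $(t_0, x_0)$ with equality at the point itself. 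The viscosity subsolution property of $N$ therefore yields $\phi_t(t_0, x_0) - f(-\phi_x(t_0, x_0)) \leq 0$; writing $(q, p) = (\varphi_t, \varphi_n)(t_0, n_0)$ and using the inverse-function formulas above, this becomes $-q/p - f(-1/p) \leq 0$. Multiplying by $-p > 0$ and invoking $\psi(-p) = (-p)\, f(1/(-p))$ delivers $\varphi_t(t_0, n_0) - \psi(-\varphi_n(t_0, n_0)) \leq 0$, which is exactly the subsolution condition for $X$. The supersolution case is entirely symmetric: a local minimum of $X - \varphi$ yields a local minimum of $N - \phi$ via the same monotonicity argument with the inequality reversed, and the viscosity supersolution condition for $N$ produces $\varphi_t - \psi(-\varphi_n) \geq 0$.

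The step I anticipate as the main obstacle is the derivative bound $\varphi_n(t_0, n_0) \in [-1/\delta, -1/\rho_{max}]$: without strict negativity the implicit function theorem does not supply $\phi$, and without the full two-sided bound the argument $-1/p$ may fall outside $[\delta, \rho_{max}]$ where $f$ is physically meaningful. Both requirements rest on a careful super/subdifferential analysis for Lipschitz viscosity solutions, which—while standard—must be executed with care in the present two-variable setting and depends essentially on the uniform positivity hypothesis (\ref{reveqn1}).
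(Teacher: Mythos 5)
Your proposal is correct and follows essentially the same route as the paper's proof: both invert the $C^1$ test function in its spatial argument (your $\phi$ is the paper's $M$), transfer the local extremum of $X-\varphi$ to one of $N-\phi$ via the strict monotonicity of $N(t,\cdot)$, apply the viscosity inequality for $N$, and translate back using $\phi_x=1/\varphi_n$ and the identity $\psi(s)=s\,f(1/s)$. Your version is in fact slightly more explicit than the paper's on two points it treats as obvious --- the verification that the extremum transfers under inversion, and the two-sided bound $\varphi_n(t_0,n_0)\in[-1/\delta,\,-1/\rho_{max}]$ ensuring $-1/\varphi_n$ lies in the domain of $f$ --- but the argument is the same.
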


\begin{proof}
By assumption, $N(t,\,\cdot)$ is strictly decreasing with 
$$
\delta\,|x_1-x_2|~\leq~|N(t,\,x_1)-N(t,\,x_2)|~\leq~\rho_{max}\,|x_1-x_2|\qquad\forall ~x_1,\,x_2
$$
then $X(t,\,\cdot)$ is also strictly decreasing with
\bel{twoside}
1/\rho_{max}|n_1-n_2|~\leq~|X(t,\,n_1)-X(t,\,n_2)|~\leq~1/\delta\,|n_1-n_2|\qquad \forall~n_1,\,n_2
\eeq
We start by showing that $X(\cdot,\,\cdot)$ is a subsolution.  Indeed, given any $C^1$ function $Y=Y(t,\,x)$ such that $X-Y$ has a local maximum at $(t_0,\,n_0)$, without loss of generality, we assume $X(t_0,\,n_0)-Y(t_0,\,n_0)=0$, take the plane $t=t_0$ (see Figure \ref{mathproof}).

\begin{figure}[h!]
\centering
\includegraphics[width=0.45\textwidth]{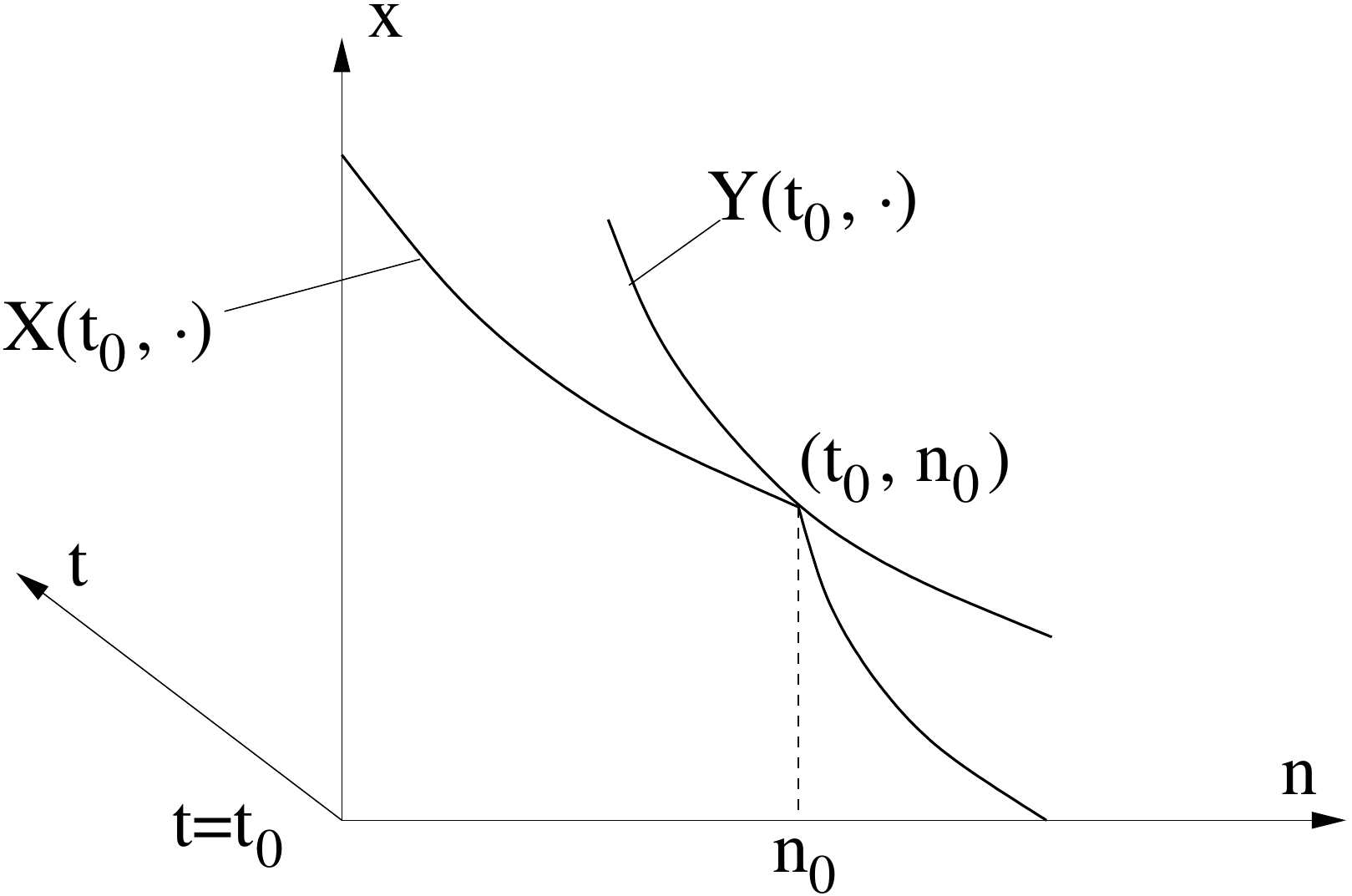}

\caption{Graphs of $X(t_0,\,\cdot)$ and $Y(t_0,\,\cdot)$}
\label{mathproof}
\end{figure}

\noindent Since $X-Y$ attains a local maximum at $(t_0,\,n_0)$, by (\ref{twoside}) there must hold $Y_n(t_0,\,n_0) <0$. By continuity, there exists a neighborhood $\Omega_1$ of $(t_0,\,n_0)$ such that $ Y_n(t,\,n)<0,~~\forall (t,\,n)\in\Omega_1$. Then, we may define $M(t,\,\cdot)$ to be the inverse of $Y(t,\,\cdot)$ in $\Omega_1$. Obviously, $M(t,\,n)\in C^1(\Omega_1)$, and $N-M$ attains a local maximum at $\big(t_0,\,X(t_0,\,n_0)\big)$. Using the fact that $N(t,\,x)$ is a viscosity solution and applying (\ref{subsol}), we deduce
\bel{a}
M_t(t,\,x)~\leq~f\Big(-M_x(t,\,x)\Big)
\eeq
Differentiating with respect to $t$ the identity $Y\big(t,\, M(t,\,x)\big)=x$,  and using (\ref{a}), we have
\bel{b}
0~=~Y_t + Y_n\,M_t~\geq~Y_t+Y_n\,f\Big(-M_x\Big)~=~Y_t+Y_n\,f\Big(-{1\over Y_n}\Big)
~=~Y_t-\psi(-Y_n)
\eeq
In the above deduction, we have used the fact that $M(t,\,\cdot)$ and $Y(t,\,\cdot)$ are  both $C^1$, and are inverse of each other. Therefore, differentiating  $n=M\big(t,\,Y(t,\,n)\big)$ w.r.t. $n$ yields $1=M_x\cdot Y_n$.

Since $Y$ is arbitrary, (\ref{b}) implies that $X(t,\,n)$ is a subsolution. The case for supersolution is completely similar.
\end{proof}

\begin{remark}
 Similar proof can be used to show the converse: given a viscosity solution $X(\cdot,\,\cdot)$ to (\ref{eqn2}),  $N(t,\,\cdot)$ obtained by inverting $X(t,\,\cdot)$ is a viscosity solution to (\ref{eqn1}).
\end{remark}

\section{Numerical algorithm and value conditions}\label{BJF}

In this section, we focus on the numerical scheme for the Hamilton-Jacobi equation
\bel{HJeqn}
\partial_t X(t,\,n)-\psi\big(-\partial_n X(t,\,n)\big)~=~0
\eeq
in the presence of  initial, boundary and internal boundary conditions. In order to avoid the issue of non-existence of solution, we adapt the notion of viability episolution \citep{ABSP}, and proceed with a variational method known as the Lax-Hopf formula \citep{ABSP, CC1, CC2}.

\subsection{Viability episolution to the Hamilton-Jacobi equation (\ref{HJeqn})}\label{LCviability}
 We consider a domain $[0,\,T]\times[N_1,\,N_2]$ for equation (\ref{HJeqn}), where  $T>0$ and $N_2> N_1> 0$. In the following definition, we make precise what we mean by value conditions, concerning (\ref{HJeqn}).
\begin{definition} 
A value condition $\C(\cdot,\,\cdot): \Omega\subset [0,\,T]\times[N_1,\,N_2]\rightarrow \mathbb{R}$ is a lower-semicontinuous function.
\end{definition}

\noindent One may extend a value condition $\mathcal{C}(\cdot,\,\cdot)$ to the entire domain by assigning $\mathcal{C}(t,\,n)=+\infty$ whenever $(t,\,n)\notin \Omega$. Such a  convention allows one to compare and manipulate value conditions with different domains.  In order to articulate the Lax-Hopf formula, we introduce the concave transformation $\psi^*(\cdot)$ of the Hamiltonian $\psi(\cdot)$
$$
\psi^*(p)~\doteq~\sup_{s\in[1/\rho_{max},\,+\infty)}\Big\{\psi(s)-p\,s\Big\}
$$ 
The following  Lax-Hopf formula provides semi-analytical representation of viability solution, given a value condition. 

\begin{theorem}
The viability episolution  to (\ref{HJeqn}) associated with value condition $\mathcal{C}(\cdot,\,\cdot)$ is given by
\bel{Lax}
X_{\mathcal{C}}(t,\,n)~=~\inf_{(u,\,T)\in\hbox{Dom}(\psi^*)\times\R_+}\big(\mathcal{C}(t-T,\,x+T\,u)+T\,\psi^*(u)\big)
\eeq
\end{theorem}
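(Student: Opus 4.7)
The plan is to apply the viability-theoretic machinery of \cite{Aubin, ABSP} directly to the Lagrangian H-J equation (\ref{HJeqn}). The key structural facts I would exploit are: (i) the concavity of the fundamental diagram $f$, combined with (\ref{psidef}), forces $\psi$ to be concave on $[1/\rho_{max},\,1/\delta]$, so the transform $\psi^*$ is the usual Fenchel-type conjugate and is convex on $\text{Dom}(\psi^*)$; and (ii) writing (\ref{HJeqn}) as $X_t + H(X_n) = 0$ with $H(p) = -\psi(-p)$, the Legendre dual of $H$ is exactly $\psi^*$ up to the change of sign built into the formula. This tells me $\psi^*$ plays the role of the \emph{running cost} along backward characteristics.

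I would then associate to (\ref{HJeqn}) the augmented characteristic dynamics on $(t,n,X)$,
\begin{equation*}
\dot t(s)~=~-1,\qquad \dot n(s)~=~-u(s),\qquad \dot X(s)~=~-\psi^*\big(u(s)\big),\qquad u(s)\in\text{Dom}(\psi^*),
\end{equation*}
and invoke the standard viability-kernel characterization: the epigraph of the viability episolution $X_\mathcal{C}$ is the smallest closed epigraph containing $\text{epi}(\mathcal{C})$ that is viable under this differential inclusion. Concretely, a point $(t,n,X)$ lies in this kernel iff there exists a control $u(\cdot)$ and time horizon $T\geq 0$ such that the trajectory reaches $\text{epi}(\mathcal{C})$. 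Integrating the augmented system along a \emph{constant} control $u$ over time $T$ yields the endpoint $(t-T,\,n+Tu,\,X-T\psi^*(u))$, and the epigraph-reachability condition becomes $X\geq \mathcal{C}(t-T,\,n+Tu)+T\,\psi^*(u)$. Minimizing the right-hand side over $(u,T)\in\text{Dom}(\psi^*)\times\R_+$ produces exactly formula (\ref{Lax}).

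The main technical hurdle I anticipate is justifying the reduction from arbitrary measurable controls $u(\cdot)$ to constant controls, which is what makes the infimum a two-parameter optimization rather than a genuine control problem. The argument I would use is a Jensen-type convexity argument: because $\psi^*$ is convex and the characteristic dynamics are affine in $u$, replacing any admissible trajectory by the one with constant control $\bar u = {1\over T}\int_0^T u(s)\,ds$ leaves the endpoint $(t-T,\,n+Tu)$ unchanged while not increasing the integrated cost $\int_0^T \psi^*(u(s))\,ds$. A complementary point is that $\text{Dom}(\psi^*)$ is convex, so $\bar u$ remains admissible. This collapses the path-space infimum onto pairs $(u,T)$.

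The final step is to verify the two defining features of a viability episolution, which I would do directly from (\ref{Lax}): lower semi-continuity of $X_\mathcal{C}$ follows from lower semi-continuity of $\mathcal{C}$ together with lower semi-continuity (in fact continuity) of $\psi^*$ on its effective domain, plus the convention $\mathcal{C}\equiv +\infty$ off $\Omega$; the inequality $X_\mathcal{C}(t,n)\leq \mathcal{C}(t,n)$ on $\Omega$ is obtained by the choice $T=0$; and the dynamic programming identity $X_\mathcal{C}(t,n)\leq X_\mathcal{C}(t-T,\,n+Tu)+T\psi^*(u)$ translates, via Theorem \ref{viscositythm} style arguments with $C^1$ test functions, into the Barron-Jensen/Frankowska inequality characterizing the viability solution of (\ref{HJeqn}). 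Uniqueness as the \emph{minimal} such episolution is immediate from the infimum construction.
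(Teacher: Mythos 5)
The paper does not actually prove this theorem---its ``proof'' is the single line ``See \cite{ABSP}''---so the relevant comparison is with the argument in that reference, and your reconstruction follows it faithfully: the generalized Lax--Hopf formula in \cite{ABSP} is indeed obtained by characterizing the epigraph of the episolution as a capture basin of $\hbox{epi}(\mathcal{C})$ under the augmented backward characteristic dynamics, and then collapsing the control problem to constant controls by convexity of $\psi^*$ (your Jensen step is exactly the mechanism that turns the path-space infimum into a two-parameter infimum over $(u,T)$). Two points deserve explicit attention if you flesh this out. First, the capture basin in \cite{ABSP} is a \emph{viable} capture basin: the trajectory must remain in the constrained environment $[0,T]\times[N_1,N_2]\times\R$ until it hits the target, so your reduction to constant controls also needs the observation that the straight segment joining $(t,n)$ to $(t-T,\,n+Tu)$ stays in the (convex, rectangular) domain whenever its endpoints do---without this the averaged control could produce an inadmissible trajectory in a nonconvex domain. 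Second, your final paragraph, which checks that the formula defines a lower semicontinuous function satisfying $X_{\mathcal{C}}\leq\mathcal{C}$ and the Barron--Jensen/Frankowska inequalities, is the part that is only sketched; in \cite{ABSP} this is a separate theorem (the equivalence of the capture-basin definition with the epigraphical sub/super-differential characterization) and is not a routine test-function argument of the kind used in Theorem \ref{viscositythm}. As a side remark, your formula correctly reads $\mathcal{C}(t-T,\,n+T\,u)$; the paper's display (\ref{Lax}) contains a typo ($x+T\,u$ in place of $n+T\,u$), which your derivation implicitly corrects.
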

\begin{proof}
See \cite{ABSP}
\end{proof}

We indicate, by a subscript, dependence of the viability solution on its value condition. So that the solution to (\ref{HJeqn}) reads $X_{\mathcal{C}}(t,\,n)$.  As a consequence of formula (\ref{Lax}) the following property of {\it inf-morphism} holds.
\begin{proposition}\label{infmorphism}{\bf (inf-morphism property)}
Let $\mathcal{C}(\cdot,\,\cdot)$ be the point wise minimum of finitely many value conditions, 
$$
\mathcal{C}(t,\,n)~\doteq~\min_{i=1,\ldots,m}\mathcal{C}_i(t,\,n)\qquad \forall~~ (t,\,n)\in[0,\,T]\times[N_1,\,N_2]
$$
Then
\bel{infmor}
X_{\mathcal{C}}(t,\,n)~=~\min_{i=1,\ldots, m}X_{\mathcal{C}_i}(t,\,n)
\eeq
\end{proposition}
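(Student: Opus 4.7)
The plan is to reduce the statement to an elementary interchange of an infimum and a finite minimum inside the Lax-Hopf representation formula (\ref{Lax}). Since the proposition quantifies over a \emph{finite} family $\{\mathcal{C}_i\}_{i=1}^m$, there is no issue with the min being attained or with measurability, so the proof should be short and essentially algebraic.

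First I would apply formula (\ref{Lax}) directly to the value condition $\mathcal{C}(t,n) = \min_{i=1,\ldots,m} \mathcal{C}_i(t,n)$, writing
\bel{planstep1}
X_{\mathcal{C}}(t,n) ~=~ \inf_{(u,T) \in \hbox{Dom}(\psi^*) \times \R_+} \Big( \min_{i=1,\ldots,m} \mathcal{C}_i(t-T,n+T u) + T \psi^*(u) \Big).
\eeq
Because $T\psi^*(u)$ is independent of the index $i$, it can be pulled inside the minimum, and then I would invoke the elementary identity $\inf_{\alpha} \min_i g_i(\alpha) = \min_i \inf_{\alpha} g_i(\alpha)$, which holds for any finite collection of functions $g_i$ on a common domain. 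Applying this to (\ref{planstep1}) yields
\bel{planstep2}
X_{\mathcal{C}}(t,n) ~=~ \min_{i=1,\ldots,m} \inf_{(u,T) \in \hbox{Dom}(\psi^*) \times \R_+} \Big( \mathcal{C}_i(t-T,n+T u) + T \psi^*(u) \Big),
\eeq
and recognizing the inner expression as $X_{\mathcal{C}_i}(t,n)$ via (\ref{Lax}) gives the claim.

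I do not expect any serious obstacle: the key point is simply that (\ref{Lax}) is a pointwise representation in which the value condition $\mathcal{C}$ enters only through its evaluation at the test point $(t-T,n+Tu)$, so the operation $\mathcal{C} \mapsto X_{\mathcal{C}}$ commutes with finite pointwise minima. The lower-semicontinuity hypothesis on each $\mathcal{C}_i$ carries over to their minimum, so $\mathcal{C}$ is itself a valid value condition and (\ref{Lax}) legitimately applies. The only subtlety worth flagging is the convention $\mathcal{C}_i(t,n) = +\infty$ off the domain $\Omega_i$: with this convention the infima are taken over the same set $\hbox{Dom}(\psi^*)\times\R_+$ for every $i$, which is exactly what is needed to justify the interchange in (\ref{planstep2}).
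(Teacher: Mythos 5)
Your proof is correct and matches the paper's intent: the paper simply asserts Proposition \ref{infmorphism} as a consequence of the Lax--Hopf formula (\ref{Lax}), and your interchange of the infimum over $(u,T)$ with the finite pointwise minimum over $i$ (together with the observation that a finite minimum of lower-semicontinuous value conditions is again a valid value condition) is exactly the elementary argument being invoked.
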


The inf-morphism property enables the H-J equation to incorporate one or more value conditions; it can also decompose a complex problem involving multiple value conditions into smaller subproblems, each one with a single condition.

\subsection{Value conditions for continuous solutions}\label{conditions}
The value conditions described in the previous section can be viewed as the mathematical abstraction of real-world measurements, obtained from Eulerian and Lagrangian sensing.  \cite{CC1,CC2}  have utilized the H-J equation in Eulerian coordinates to incorporate sensing data. In the rest of this paper, we not only extend their framework to Lagrangian coordinates, but also establish sufficient condition that guarantees the equivalence between these two systems, in the presence of a single value condition.

Consider again the two H-J equations (\ref{eqn1}) and (\ref{eqn2}) in EC and LC.  We make two assumptions on the value condition $\mathcal{C}$, as follows

{\bf (A1)} The domain of $\mathcal{C}$ is a continuous curve parametrized by $\tau\in[\tau_{min},\,\tau_{max}]$:
\begin{align*}
\hbox{Dom}(\mathcal{C})\subset [0,\,T]\times[N_1,\,N_2]\qquad &\Big(\hbox{respectively}~~  [0,\,T]\times[X_1,\,X_2]\Big)
\\
\hbox{Dom}(\mathcal{C})~=~\big(t(\tau),\,n(\tau)\big)\qquad &\Big(\hbox{respectively} ~~\big(t(\tau),\, x(\tau)\big)\Big)
\\
&\tau\in[\tau_{min},\,\tau_{max}]
\end{align*}

{\bf (A2)}  $\mathcal{C}\big(t(\cdot),\,n(\cdot)\big)$  is  a continuous function on $[\tau_{min},\,\tau_{max}]$. 

Given an arbitrary continuous value condition $\mathcal{C}(\cdot,\,\cdot)$ in the Eulerian domain (e.g. measurement of a loop detector), we need to determine how to fuse such datum into the Lagrangian based PDE. The next proposition provides an answer.

\begin{proposition}\label{sufficient} {\bf (Sufficient condition for equivalence of  value conditions)}

Let $\mathcal{C}^E\big(t(\tau),\,x(\tau)\big)$ and $\mathcal{C}^L(t(\tau),\,n(\tau)),\,\tau\in[\tau_{min},\,\tau_{max}]$ be two value conditions for (\ref{eqn1}) and (\ref{eqn2}), respectively. Then the solutions to (\ref{eqn1}) and (\ref{eqn2}) satisfying each value condition in the equality sense are equivalent if
$$
n(\tau)~=~\mathcal{C}^E\big(t(\tau),\,x(\tau)\big),\qquad x(\tau)~=~\mathcal{C}^L\big(t(\tau),\,n(\tau)\big)
$$
\end{proposition}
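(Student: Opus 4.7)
The plan is to exploit Theorem \ref{viscositythm}, which already gives an equivalence between viscosity solutions of (\ref{eqn1}) and (\ref{eqn2}) via the partial inversion $n = N(t,x) \iff x = X(t,n)$. Once that bijection at the level of solutions is in hand, the statement reduces to a compatibility check: the Eulerian value condition $\mathcal{C}^E$ and the Lagrangian value condition $\mathcal{C}^L$ must encode the \emph{same} pointwise information after the coordinate inversion along the parametrized curve.

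The first step is the forward direction. I would take $N(\cdot,\cdot)$ to be a viscosity solution of (\ref{eqn1}) that attains its value condition in the equality sense, i.e.
\bel{planEq}
N\bigl(t(\tau),\,x(\tau)\bigr)~=~\mathcal{C}^E\bigl(t(\tau),\,x(\tau)\bigr)~=~n(\tau), \qquad \tau\in[\tau_{min},\,\tau_{max}],
\eeq
where the second equality is the hypothesis of the proposition. Under assumption (\ref{reveqn1}), for each fixed $t$ the map $N(t,\cdot)$ is strictly decreasing, so its partial inverse $X(t,\cdot)$ is well defined. Theorem \ref{viscositythm} then gives that $X$ is a viscosity solution of (\ref{eqn2}). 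It remains to verify the Lagrangian value condition: using (\ref{planEq}) together with the inversion identity $X\bigl(t,\,N(t,x)\bigr)=x$, I obtain
\bel{planComp}
X\bigl(t(\tau),\,n(\tau)\bigr)~=~X\bigl(t(\tau),\,N(t(\tau),\,x(\tau))\bigr)~=~x(\tau)~=~\mathcal{C}^L\bigl(t(\tau),\,n(\tau)\bigr),
\eeq
where the final equality invokes the second hypothesis of the proposition. Hence $X$ attains $\mathcal{C}^L$ in the equality sense.

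The converse direction is entirely symmetric: starting from a viscosity solution $X$ of (\ref{eqn2}) that equals $\mathcal{C}^L$ on the curve, I would invert $X(t,\cdot)$ (whose two-sided Lipschitz bound (\ref{twoside}) is still in force because the density bounds are preserved), apply the remark following Theorem \ref{viscositythm} to conclude that $N$ solves (\ref{eqn1}), and then read off $N\bigl(t(\tau),x(\tau)\bigr)=n(\tau)=\mathcal{C}^E\bigl(t(\tau),x(\tau)\bigr)$ by the same one-line calculation as (\ref{planComp}). Together the two directions establish that the Eulerian and Lagrangian value-condition problems yield solutions that are related by partial inversion, which is the precise sense in which they are ``equivalent.''

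The main obstacle is not analytic but setup-related: one must make sure that the hypothesis really says the two value conditions trace out the \emph{same} curve in space-time once mapped through the Moskowitz inversion, and that the prescribed values are consistent pointwise. The continuity assumptions (A1)--(A2) ensure the parametrized curve is well-behaved, and the uniform positivity (\ref{reveqn1}) guarantees a global bijection between $x$ and $n$ at each time slice, so no degeneracy can arise along the curve; these are exactly what is needed to push the identity $X(t,N(t,x))=x$ down to a compatibility of value conditions. Once these ingredients are lined up, the proof is essentially the single substitution displayed in (\ref{planComp}).
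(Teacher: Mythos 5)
Your argument is correct and follows essentially the same route as the paper: invert the Eulerian solution $N(t,\cdot)$, invoke Theorem \ref{viscositythm} to see the inverse solves (\ref{eqn2}), and verify the Lagrangian value condition by the one-line substitution $X\big(t(\tau),N(t(\tau),x(\tau))\big)=x(\tau)=\mathcal{C}^L\big(t(\tau),n(\tau)\big)$, which is exactly the paper's computation (\ref{123}). The only cosmetic difference is that the paper closes by appealing to uniqueness of the solution associated with $\mathcal{C}^L$ to identify the inverse with $X_{\mathcal{C}^L}$, whereas you run the symmetric converse direction explicitly; both are fine.
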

\begin{proof}
Let $N_{\mathcal{C}^E}(t,\,x)$ be the solution to (\ref{eqn1}) satisfying condition $\mathcal{C}^E$, let $\,X_{\mathcal{C}^L}(t,\,n)$ be the solution to (\ref{eqn2}) satisfying condition $\mathcal{C}^L$. For each $t$, since $N_{\mathcal{C}^E}(t,\,\cdot)$ is strictly increasing, we denote its inverse by $N^{-1}_{\mathcal{C}^E}(t,\,\cdot)$. Then by Theorem \ref{viscositythm},  $N^{-1}_{\mathcal{C}^E}(\cdot,\,\cdot)$ is a valid solution to the HJ equation (\ref{eqn2}).

On the other hand, for every $\tau\in[\tau_{min},\,\tau_{max}]$
\bel{123}
N^{-1}_{\mathcal{C}^E}\big(t(\tau),\,n(\tau)\big)~=~N^{-1}_{\mathcal{C}^E}\big(t(\tau),\,\mathcal{C}^E(t(\tau),\,x(\tau))\big)~=~N^{-1}_{\mathcal{C}^E}\Big(t(\tau),\, N_{\mathcal{C}^E}(t(\tau),\,x(\tau))\Big)~=~x(\tau)
\eeq
(\ref{123}) implies that $N^{-1}_{\mathcal{C}^E}(\cdot,\,\cdot)$ satisfies value condition $C^L\big(t(\tau),\,n(\tau)\big)$ and thus is the unique solution to (\ref{eqn2}) associated with value condition $\mathcal{C}^L$.   We conclude $N^{-1}_{\mathcal{C}^E}(t,\,n)=X_{\mathcal{C}^L}(t,\,n)$.
\end{proof}

\noindent Proposition \ref{sufficient} suggests that, by simply invert the Eulerian (Lagrangian) value condition $\mathcal{C}(t(\tau),\,\cdot)$, one can obtain its dual version in LC (EC). The dual value condition yields solution that is equivalent to the original solution.

\section{Explicit solution with piecewise affine value conditions}\label{simplesol}
In this section we apply the Lax-Hopf formula (\ref{Lax}) to for an explicit solution in the Lagrangian coordinates. To that end, we assume that the value conditions are {\it piecewise affine} (PWA). This assumption is justified on the ground of (1) an easily constructible explicit solution representation, with PWA conditions; and (2) standard result on PWA approximation of any piecewise continuous functions or curves.

\subsection{Piecewise affine value conditions}\label{PWAdef}

We start with articulating piecewise affine value conditions, which includes initial/intermediate, upstream, downstream and internal conditions. 
\begin{definition}\label{initial}
{\bf (PWA initial/intermediate condition)}

Set $t=t_0\geq 0$, given real numbers $s_i\geq0,\,n_i,\, i\in\{1,\ldots, m_{ini}\}$, the $j^{th}$ affine component of initial/intermediate condition is 
\bel{pwaini}
\mathcal{C}_{ini}(t,\,n)~=~-s_j\,n+d_j,\qquad (t,\,n)\in\{t_0\}\times[n_j,\,n_{j+1}]
\eeq
To ensure continuity, we require
$$
d_j~=~s_j\,n_j-\sum_{l=1}^{j-1}(n_{l+1}-n_l),\qquad j~=~2,\ldots, m_{ini}
$$
\end{definition}

\begin{definition}\label{upstream}
{\bf (PWA upstream boundary condition)}

Fix $n=N_1$, given real numbers $v^i\geq 0,\, t^i,\,i\in\{1,\,\ldots,\,m_{up}\}$, the $j^{th}$ affine component of upstream boundary condition is defined as
\bel{pwaup}
\mathcal{C}_{up}^j(t,\,n)~=~v^j\,t+b^j,\qquad (t,\,n)\in[t^j,\,t^{j+1}]\times\{N_1\}
\eeq
To ensure continuity of upstream boundary condition, we set 
$$
b^j~=~-v^j\,t^j+\sum_{l=1}^{j-1}(t^{l+1}-t^l)v^l,\qquad j~=~2,\ldots, m_{up}
$$
\end{definition}

\begin{definition}\label{downstream}
{\bf (PWA downstream boundary condition)}

Fix $n=N_2$, given real numbers $v_i\geq 0$, $t_i,\, i\in\{1,\ldots, m_{down}\}$, the $j^{th}$ affine component of downstream boundary condition is defined as
\bel{pwadown}
\mathcal{C}_{down}^j(t,\,n)~=~v_j\,t+b_j,\qquad (t,\,n)\in[t_j,\,t_{j+1}]\times\{N_2\}
\eeq
where
$$
b_j~=~-v_j\,t_j+\sum_{l=1}^{j-1}(t_{l+1}-t_l)v_l,\qquad j~=~1,\ldots, m_{down}
$$
\end{definition}

\begin{definition}\label{internal}
{\bf  (Affine internal boundary condition)}

Given real numbers $\alpha,\,\beta, t_{min},\,t_{max},\,n_{min},\,n_{max}, \, r\geq 0$, the affine internal boundary condition is defined as
\bel{affinternal}
\mathcal{C}_{int}(t,\,n)~=~\beta+\alpha\,(t-t_{min})\qquad t\in[t_{min},\,t_{max}],~~n=n_{min}+r(t-t_{min})
\eeq

\end{definition}

Recall that the domain of our consideration is $[0,\,T]\times[N_1,\,N_2]$, thus the upstream/downstream boundary conditions refer to (part of) the trajectories of the first and last car within our scope.

\subsection{Explicit formulae for viability episolutions}\label{LCexplicit}

In the presence of piecewise affine (PWA) data, the solution of Lagrangian equation (\ref{eqn2}) with a piecewise affine Hamiltonian can be computed explicitly, as given by the Lax-Hopf formula (\ref{Lax}).  We  denote $s_{min}\doteq 1/\rho_{max},\,s^*\doteq 1/\rho^*$, let $k>0$,  define
\bel{pwapsi}
\psi(s)~=~\begin{cases} k\,s,\qquad& s\in[s_{min},\, s^*]\\ v_{max},\qquad & s\in[s^*,\, +\infty)\end{cases}
\eeq
Notice that this fundamental diagram corresponds to a triangular density-flow relationship. Moreover, the concave transformation of $\psi(\cdot)$ reads
\bel{pwapsis}
\psi^*(u)~=~s^*\,(k-u)\qquad u\in[0,\,k]
\eeq

\begin{proposition}\label{explicit2}
Given each affine value condition defined in (\ref{pwaini})-(\ref{affinternal}), and Hamiltonian (\ref{pwapsi}), the solution to the Lagrangian Hamilton-Jacobi equation (\ref{eqn2}) are respectively

\begin{itemize}

\item[1.] Initial/intermediate value problem

if $s_j~\leq s^*$, 
\bel{inisimple1}
X_{ini}^j(t,\,n)=\begin{cases}
-s_j\,n+d_j+(t-t_0)k\,s_j,\\
n_j+k(t-t_0)\leq n\leq n_{j+1}+k(t-t_0);\\
\\
\ds-s_j\,n_j+d_j+(t-t_0)\,s^*\,k-s^*\,(n-n_j),\\
0\leq n-n_j\leq k(t-t_0).
\end{cases}
\eeq

if $s_j~>s^*$, 
\bel{inisimple2}
X_{ini}^j(t,\,n)=\begin{cases}
\ds-s_j\,n_{j+1}+d_j+(t-t_0)\,s^*\,k-s^*\,(n-n_{j+1}),\\
0\leq n-n_{j+1}\leq k(t-t_0);\\
\\
-s_j\,n+d_j+v_{max}(t-t_0),\\
n_j\leq n\leq n_{j+1}.
\end{cases}
\eeq

\item[2.] Upstream boundary value problem
\bel{upstreamsimple}
X_{up}^j(t,\,n)=\begin{cases}v^jt^{j+1}+b^j+s^*\big(k(t-t^{j+1})-(n-N_1)\big),\\
0\leq n-N_1\leq k(t-t^{j+1}); \\
\\
\ds v^j\,t+b^j-(n-N_1)\,{v^j\over k},\\
\max\{0,\,k(t-t^{j+1})\}\leq n-N_1\leq k(t-t^j) .
\end{cases}
\eeq

\item[3.] Downstream boundary value problem

\bel{downsimple}
X_{down}^j(t,\,x)=v_j\,t_{j+1}+b_j+(t-t_{j+1})\,v_{max},\quad (t,\,x)\in[t_{j+1},\,+\infty)\times \{N_2\}
\eeq

\item[4.] Internal boundary value problem
\bel{intsimple}
X_{int}(t,\,n)=\begin{cases}
\beta+\alpha\Big(\ds{n-n_{min}-kt+r\,t_{min}\over r-k}-t_{min}\Big)\\
n-n_{min}\geq r\,(t-t_{min}),\\
k(t-t_{max}) < n-n_{max}, \\
n-n_{min}\leq k(t-t_{min});\\
\\
\ds \beta+{n-n_{min}\over r}\,\alpha+s^*k\big(t-t_{min}-{n-n_{min}\over r}\big)\\
0 \leq n-n_{min} < r(t-t_{min})\quad\hbox{and}~~~n\leq n_{max};\\
\\
\ds\beta+\alpha\,(t_{max}-t_{min})+(t-t_{max})\,s^*k-s^*\,(n-n_{max}),\\
0 \leq n-n_{max}\leq k(t-t_{max});
\end{cases}
\eeq

\end{itemize}
\end{proposition}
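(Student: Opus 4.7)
The plan is to apply the Lax--Hopf formula (\ref{Lax}) directly to each of the four families of piecewise affine value conditions, exploiting the fact that both $\psi^*(u) = s^*(k-u)$ and each affine piece of $\mathcal{C}$ are linear on their respective domains. As a consequence, the objective $\mathcal{C}(t-T, n+Tu) + T\psi^*(u)$ is jointly affine in $(u,T)$ on every component of the feasible polytope cut out by $u \in [0,k]$, $T \geq 0$, and the requirement that the back--tracked point lie in the domain of $\mathcal{C}$. The infimum is therefore attained at a vertex of that polytope, and the case distinctions in (\ref{inisimple1})--(\ref{intsimple}) simply record, for each $(t,n)$, which vertex is optimal.

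For the initial/intermediate condition (\ref{pwaini}), the constraint $t - T = t_0$ pins $T = t - t_0$ and reduces the problem to a one--dimensional affine minimization over $u \in [0,k]$ subject to the back--tracked position lying in $[n_j, n_{j+1}]$. A short computation shows that the slope in $u$ of the objective changes sign at $s_j = s^*$; this explains why (\ref{inisimple1}) and (\ref{inisimple2}) separate into the regimes $s_j \leq s^*$ and $s_j > s^*$, and within each regime the two sub--cases correspond to whether the optimising $u$ is pinned to a structural bound ($u = 0$ or $u = k$) or to a domain bound ($u$ hitting $n_j$ or $n_{j+1}$). The upstream boundary case (\ref{pwaup}) is handled analogously: the equation $n + Tu = N_1$ together with $t - T \in [t^j, t^{j+1}]$ cuts out a segment in the $(u,T)$ plane, and the two branches of (\ref{upstreamsimple}) correspond respectively to the time--domain bound $t - T = t^{j+1}$ being active and to the structural bound $u = k$ being active. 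The downstream problem (\ref{pwadown}) is the simplest, since the backward characteristics compatible with $n = N_2$ force $u = 0$; the infimum then reduces to one over $T$ alone and is attained at $T = t - t_{j+1}$ with the free--flow speed $v_{max} = k s^*$, yielding (\ref{downsimple}).

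The main obstacle is the internal boundary case (\ref{affinternal}). Parametrising the trajectory by $\tau \in [t_{min}, t_{max}]$, the compatibility conditions $t - T = \tau$ and the requirement that the back--tracked $n$--coordinate equal $n_{min} + r(\tau - t_{min})$ collapse to a single scalar relation of the form $T(u - r) = n - n_{min} - r(t - t_{min})$, after which the objective can be rewritten as an affine function of one free parameter (conveniently, $\xi = \tau - t_{min}$), with slope equal to $\alpha - s^*(k - r)$. A direct case analysis, based on the sign of $u - r$ (equivalently, whether $(t,n)$ lies above or below the extended trajectory line), the sign of this slope, and which of the bounds $\tau = t_{min}$, $\tau = t_{max}$, $u = 0$, $u = k$ is active, produces the three sub--cases of (\ref{intsimple}); geometrically they correspond to the backward characteristic from $(t,n)$ meeting the interior of the trajectory with $u = k$ in the wedge above the line, meeting the interior with $u = 0$ in the wedge below it, or leaving the trajectory at the terminal endpoint $(t_{max}, n_{max})$ and radiating a free--flow fan of characteristics $u \in [0, k]$. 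The book--keeping of active constraints across these three regions is the most tedious step, but each sub--case is a routine plug--in to the same affine--optimisation template used throughout.
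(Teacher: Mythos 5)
Your proposal is correct and is precisely the verification the paper has in mind: the paper omits the proof entirely, stating only that it ``is straightforward,'' and your direct evaluation of the Lax--Hopf infimum (affine objective over the polytope of admissible $(u,T)$, minimum attained at a vertex, case analysis over which constraints are active) reproduces each of (\ref{inisimple1})--(\ref{intsimple}). Two small points worth flagging: the paper's formula (\ref{Lax}) contains a variable/sign typo --- the back-tracked label should read $n-Tu$, consistent with waves propagating toward larger $n$ (your branch-by-branch checks, e.g.\ the slope $T(s_j-s^*)$ in $u$ for the initial condition, only come out right with that convention) --- and pinning the optimal vertex to the stated branch in the upstream and internal cases implicitly uses the compatibility conditions $v^j\le v_{max}$ and $\alpha\le s^*(k-r)$ to fix the sign of the relevant affine slope, a hypothesis the paper also leaves tacit.
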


\begin{remark}
 (\ref{intsimple}) is well defined even if $k=r$ and $r=0$. 
\end{remark}

The proof of Proposition \ref{explicit2} is straightforward. A simple verification reveals that the formulae for upstream, downstream and internal boundary conditions (\ref{upstreamsimple})-(\ref{intsimple}) can be merged into one, namely (\ref{intsimple}). In other words, the upstream and downstream conditions can be treated as spacial cases of internal boundary conditions. This is because the wave propagation speed is always non-negative, and any value condition can only affect the solution at region with a larger $n$. This coincides with the intuition that a car cannot be affected by others behind it. 

In order to compute the PDE solution with more complicated value conditions, for example, the one including multiple data,  we invoke the inf-morphism property from Proposition \ref{infmorphism}, and take the point wise minimum over all solutions obtained from (\ref{inisimple1}), (\ref{inisimple2}) and (\ref{intsimple}).

The Lax-Hopf formula is a grid-free numerical scheme in the sense that the computational procedure  does not rely on a two-dimensional grid, as opposed to finite-difference schemes. In addition, the solution algorithm is highly parallelizable: the problem of fusing multiple value conditions into one PDE can be decomposed into independent sub-problems. In each sub-problem, the solution can be solved efficiently with formulae (\ref{inisimple1})-(\ref{intsimple}).

\section{Numerical Study}\label{numerical}
In this section, we conduct a numerical case study using traffic data obtained from the Mobile Century field experiment \citep{MC}. The dataset includes vehicle trajectory data and cumulative passing vehicles measured  at different locations along a highway segment. We utilize a subset of the Mobile Century dataset and fuse them into the Lagrangian Hamilton-Jacobi equation. The goal is to perform the estimation of Lagrangian quantities such as vehicle trajectories and velocities.

\subsection{The Mobile Century field experiment}
On February 8, 2008, an experiment in the area of traffic monitoring was launched between 9:30 am to 6:30 pm on freeway I-880 near Union City in the San Francisco Bay Area, California. This experiment involved 100 vehicles  carrying GPS-enabled Nokia N95 phones. The probe vehicles repeatedly drove loops of 6 to 10 miles in length continuously for 8 hours.

Carried by each probe vehicle, each on-board smart phone stored its position and velocity every 3 to 4 seconds. In addition to the cell phone  data, the experiment also collected data on cumulative passing vehicles at different locations along the highway. The reader is referred to \cite{MC} for more details on experimental design and data description.

\subsection{Details of  the numerical study}
The freeway segment of our study is a 3.45 mile stretch of I-880 North Bound, between postmile 23.36, and  postmile 26.82. We utilize two types of data: (i) the cumulative vehicle count, obtained via loop detector station 400536 (postmile 23.36), which counts the number of passing vehicles every 30 seconds; (ii) vehicle trajectories, recorded by on-board smart phones every 3 to 4 seconds.  Our study spanned one hour,  from 11:30 am to 12:30 pm, and involved a total traffic volume of approximately 5000 vehicles. We utilized data sent from 97 probe vehicles, The vehicle trajectories are plotted in Figure \ref{mobiledata}. In order to identify the `label' of those probe vehicles, we processed the data on the cumulative vehicle count obtained from Station 400536.  

We assume that the fundamental diagram $\psi(\cdot)$ in (\ref{eqn2}) is piecewise affine. Such a fundamental diagram is calibrated with data collected form the same experiment (detail of the model calibration is omitted for brevity). The key parameters are estimated as follows.
$$
v_{max}~=~31.5~(meter/second),\quad \rho_{max}~=~0.50~(vehicle/meter),\quad \rho^*~=~0.055~(vehicle/meter)
$$
The resulting fundamental diagram reads
\bel{svestimate}
\psi(s)~=~\begin{cases}1.95\,s\qquad &\hbox{if}\quad 2.00~<~s~<~18.15\\
\\
31.5\qquad&\hbox{if}\quad s~\geq~18.15
\end{cases}
\eeq

\subsection{Numerical results}

We reconstructed the traffic state between postmile 23.36 and 26.82 for a period of one hour, using the numerical method discussed in Section  \ref{simplesol}. Among the 97 vehicle trajectories, we used 44 of them as training data. In other words, the viability episolution to equation (\ref{eqn2}) was computed with the Lax-Hopf formula and 44 internal boundary conditions. Notice that the penetration rate of probe vehicles is around $0.88\%$, given the total traffic volume in this one-hour period. Recall that  the on-board smart phone recorded vehicle location every 3 to 4 seconds. However, in our actual computation, we sampled the vehicle location at a much lower frequency, namely at every 30 to 90 seconds. As we show later in this section, the numerical performance of the model remains robust under such crude measurements.

\begin{figure}[h!]
\centering
\includegraphics[width=0.7\textwidth]{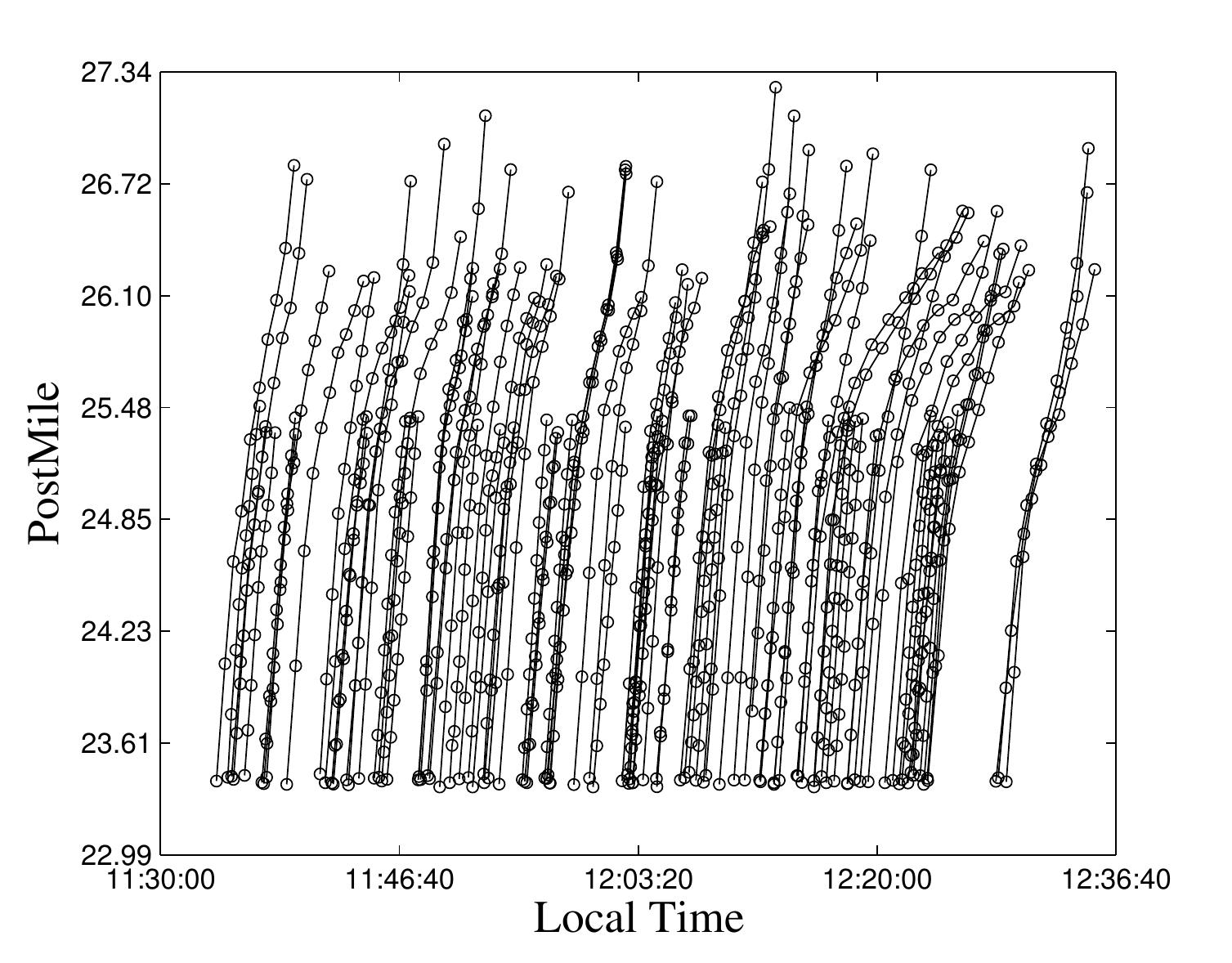}
\caption{Recorded trajectories of 97 probe vehicles.}
\label{mobiledata}
\end{figure}

Figure \ref{sol7800} displays the solution of the H-J equation (\ref{HJeqn}) based on 8 vehicle trajectories (internal boundary conditions). It should be noted that the viability episolution satisfies the value conditions only in an inequality sense \citep{ABSP}, that is 
$$
X_{\mathcal{C}}(t,\,n)~\leq~\mathcal{C}(t,\,n),\qquad (t,\,n)\in \hbox{Dom}(\mathcal{C})
$$
If the strict inequality holds, the value conditions and the model are said to be incompatible. The incompatibility is due to either error in measurements or inaccuracy of the model itself. A class of problems derived from incompatibility, namely data assimilation and data reconciliation, were discussed in \cite{Convex}.

\begin{figure}[h!]
\centering
\includegraphics[width=0.85\textwidth]{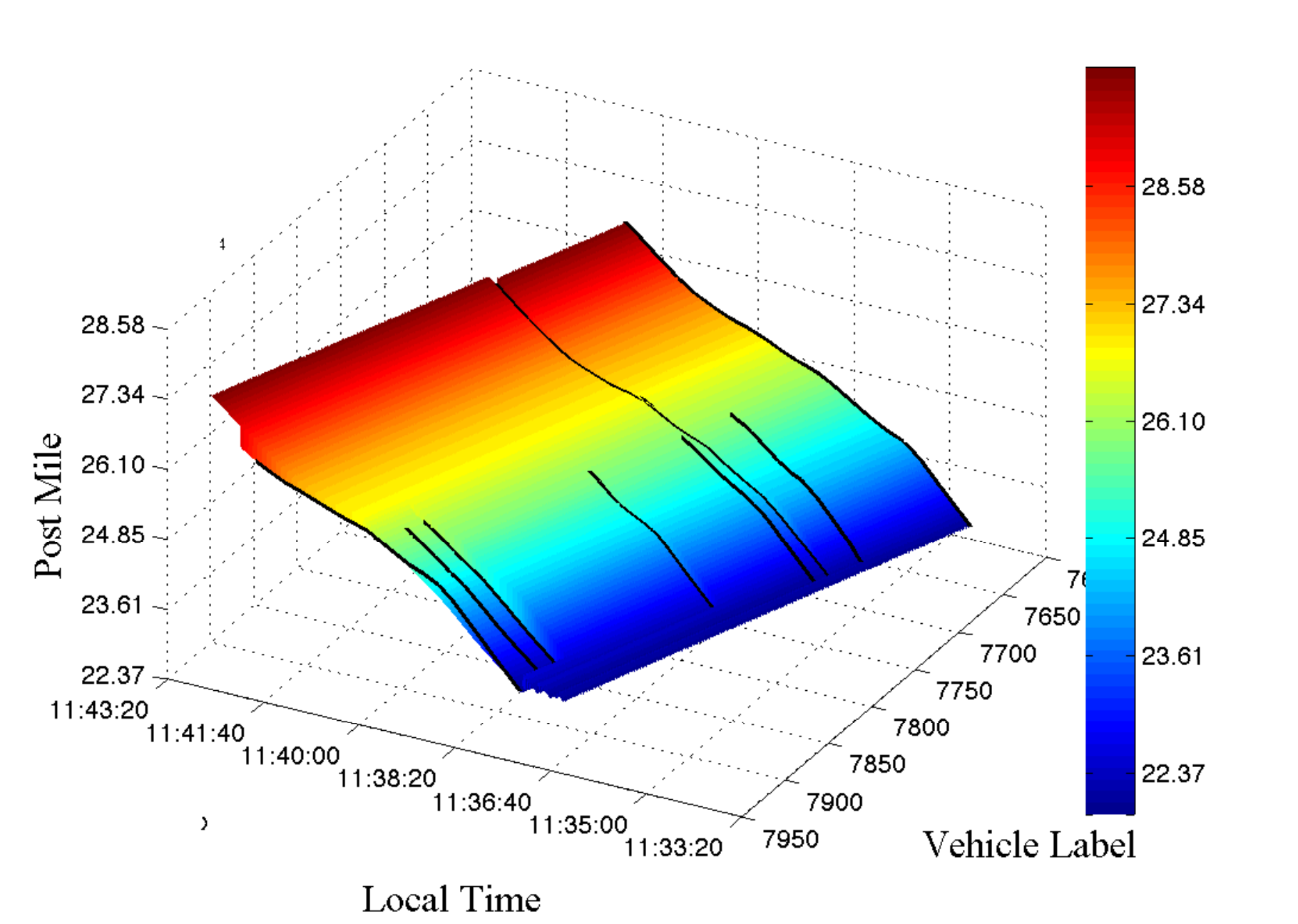}

\caption{Solution to the Hamilton-Jacobi equation (\ref{HJeqn}) with 8  (internal) boundary conditions. The solid lines are plots of vehicle trajectories.}
\label{sol7800}
\end{figure}

The obtained solution $X(t,\,n)$ describes the time evolution of the location of vehicle labeled $n$. In other words, it can be used to reconstruct vehicle trajectories. We fixed several values of $n$, and plotted the curves of $X(\cdot,\,n)$. Two examples are shown in Figure \ref{trajectory1} and \ref{trajectory2}. For comparison purpose, we also included the actual vehicle trajectory reported by the smart phone in the same figure. 
\begin{figure}[h!]
\begin{minipage}[b]{.49\textwidth}
\centering
\includegraphics[width=1.1\textwidth]{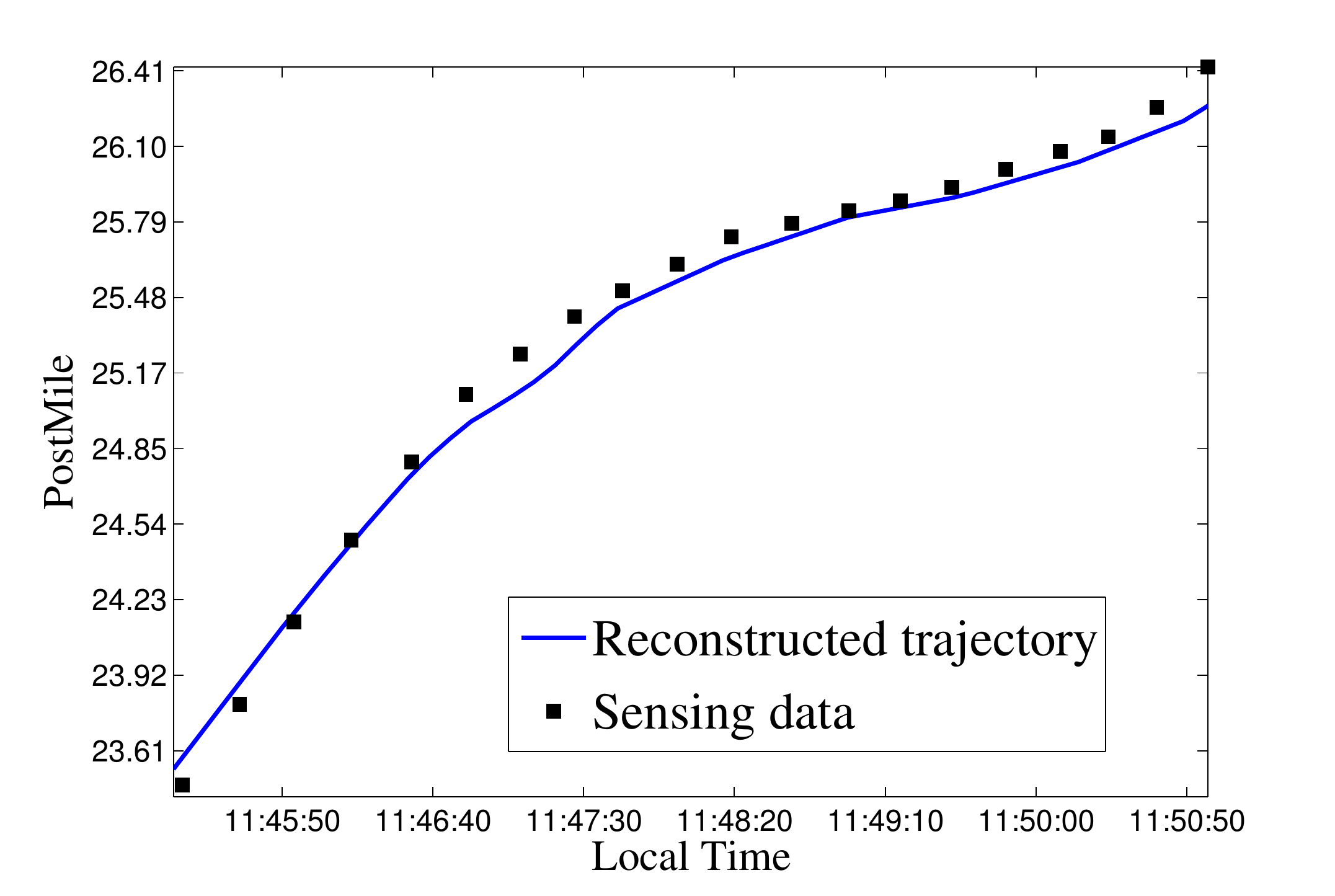}
\caption{\small Reconstructed and measured trajectories of vehicle \#8685}
\label{trajectory1}
\end{minipage}
\hspace{0.001cm}
\begin{minipage}[b]{.49\textwidth}
\centering
\includegraphics[width=1.1\textwidth]{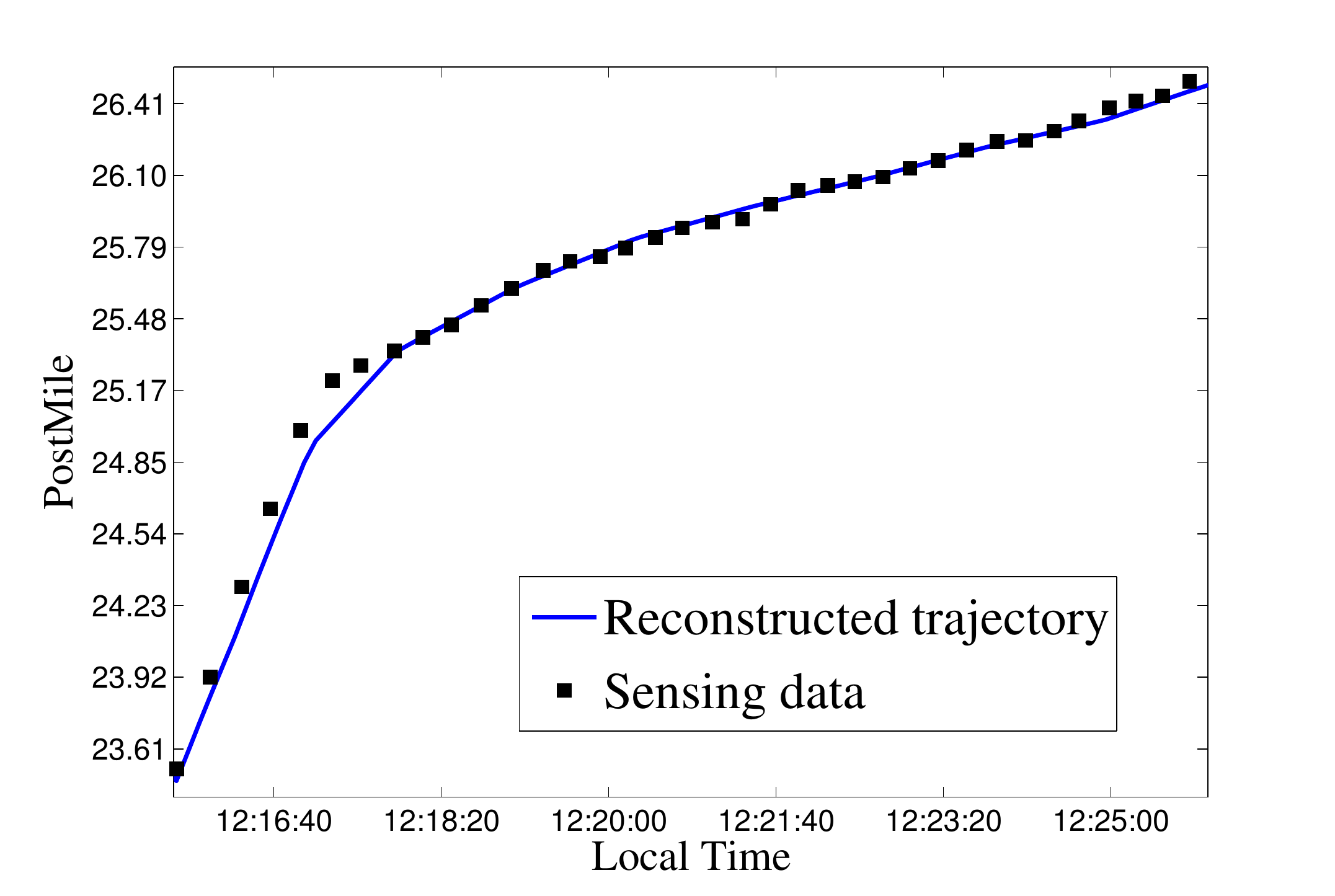}
\caption{\small Reconstructed and measured trajectories of vehicle \#11266}
\label{trajectory2}
\end{minipage}
\end{figure}

Recall that 
\bel{partialt}
\partial_t X(t,\,n)~=~\psi\Big(s(t,\,n)\Big)~=~v(t,\,n)
\eeq
By numerically differentiating $X(t,\,n)$ with respect to $t$, one obtains the velocity field $v(t,\,n)$. 
Figure \ref{vel} shows the velocity field involving vehicle labeled 7650 to 7950. This result was obtained by differentiating the Moskowitz function displayed in Figure \ref{sol7800}.  From Figure \ref{vel}, we clearly observe a time-varying congestion level experienced by the drivers: vehicles experience first free flow traffic, then congested traffic, and finally less congested traffic. This observation is confirmed in Figure \ref{mobiledata}, where a similar driving condition is shown for all probe vehicles.

Finally, we performed the task of travel time estimation using the Lagrangian approach. The travel times were estimated from the vehicle trajectories, which were again obtained by the viability episolution $X(t,\,n)$. In order to examine the accuracy of our estimation, we compared the actual travel times of the 97 probe vehicles with the aforementioned estimated travel times. The result is summarized in Figure \ref{error}.

\begin{figure}[h!]
\centering
\includegraphics[width=0.85\textwidth]{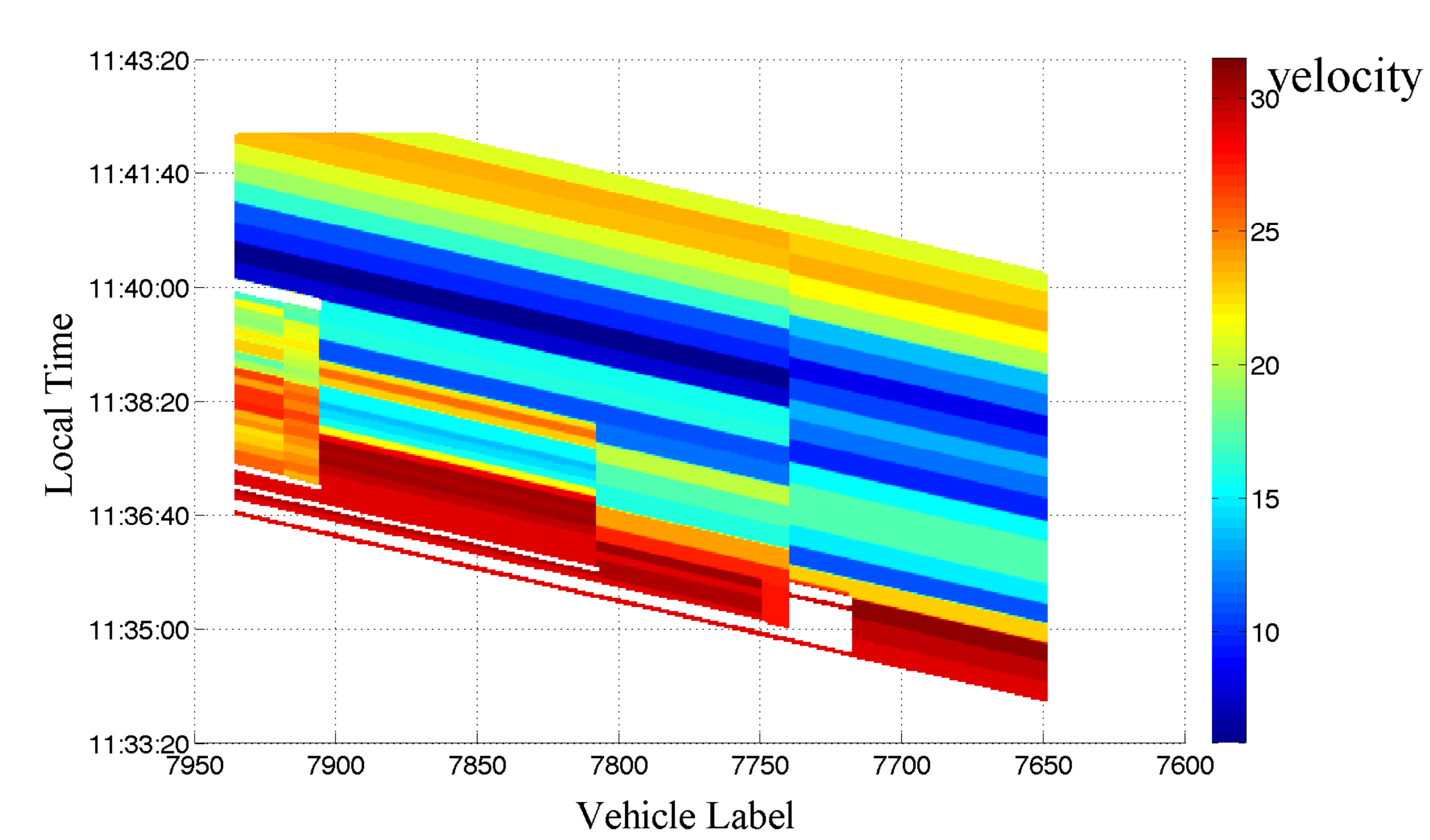}
\caption{Vehicle velocity field (in meter/second)  obtained from (\ref{partialt})}
\label{vel}
\end{figure}

\begin{figure}[h!]
\centering
\includegraphics[width=0.7\textwidth]{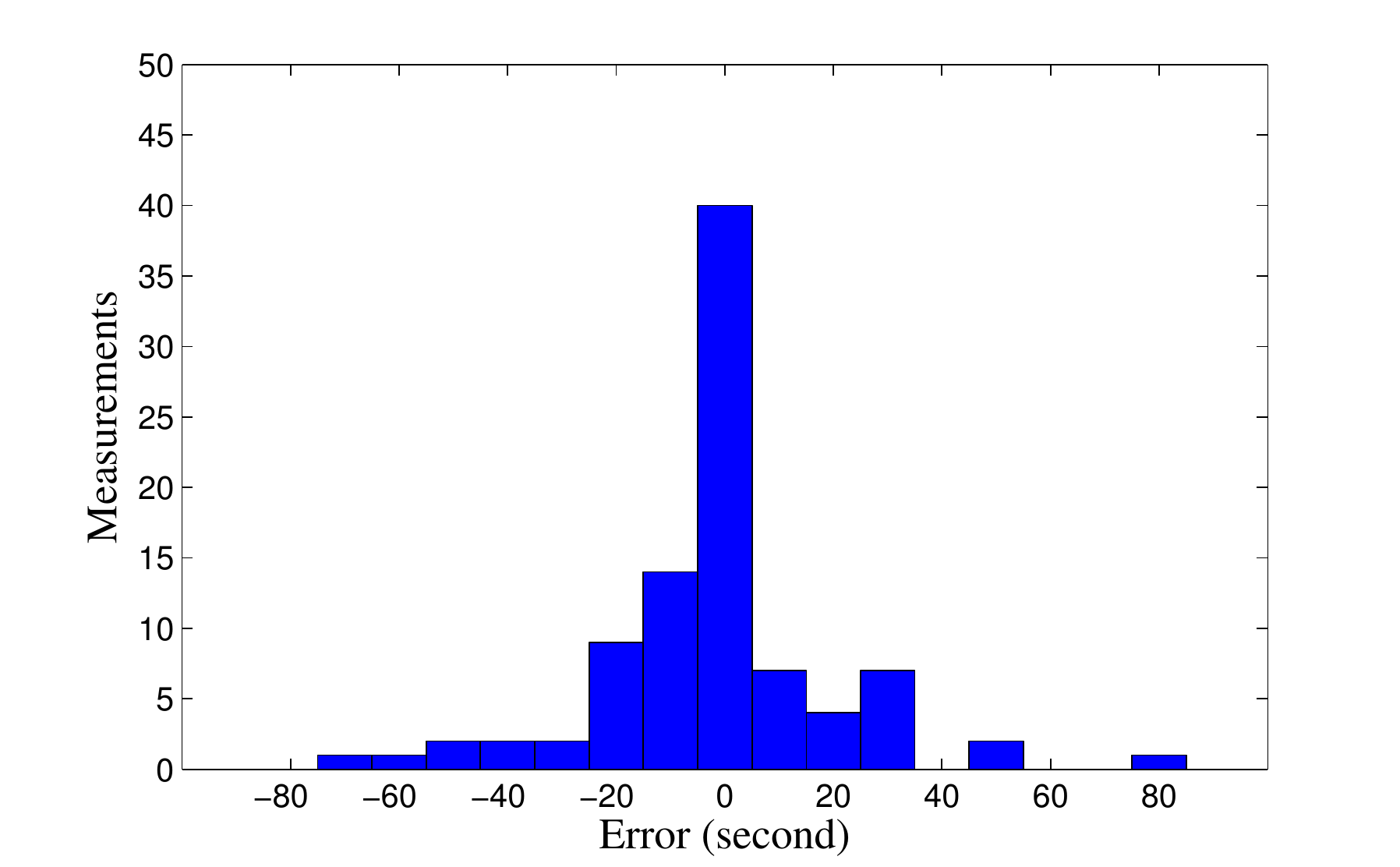}

\caption{Error of travel time estimation}
\label{error}
\end{figure}

\section{Conclusion}
This paper applies the Lagrangian-based first-order hydrodynamic traffic model to perform highway traffic estimation, utilizing both Eulerian and Lagrangian sensing data. We briefly discuss the Lighithill-Whitham-Richards model in both Eulerian and Lagrangian coordinate systems, and provide rigorous mathematical results on the equivalence of Hamilton-Jacobi equations in these two coordinate systems. We derive closed-form solutions to the LC-based Hamilton-Jacobi equation, and discuss the issue of fusing traffic data of different sources into the H-J equation via the notion of viability episolution. A case study of the Mobile Century project demonstrates the unique modeling features and insights offered by the Lagrangian-based PDE.

Despite several similarities between the EC-based and LC-based approaches such as structure of the PDE and solution method, the LC has the advantage of providing information associated with a given vehicle. Such a feature needs to be further explored and engineered to facilitate the development of cyber-physical infrastructure, including the {\it mobile internet}. In addition,  extension of LC-based PDE to a network setting is also an important aspect of future research.


\begin{thebibliography}{99}

\bibitem[Aubin, 1991]{Aubin}
Aubin, J.P., 1991.  Viability Theory. Boston, MA: Systems and Control: Foundations and Applications. Birkh$\ddot a$user.


\bibitem[Aubin et al., 2008]{ABSP}
Aubin, J.P., Bayen, A.M., Saint-Pierre, P., 2008. Dirichlet problems for some Hamilton-Jacobi equations with inequality constraints. SIAM Journal on Control and Optimization  47 (5), 2348-2380.


\bibitem[Barron and Jensen, 1990]{BJ} Barron, E.N., Jensen, R., 1990. Semicontinuous viscosity solutions for Hamilton-Jacobi equations with convex Hamiltonians. Communications in Partial Differential Equations  15, 1713-1742.

 \bibitem[Bressan, 2000]{Bbook} Bressan, A., 2000. Hyperbolic Systems of Conservation Laws. The One Dimensional Cauchy Problem, Oxford University Press.

\bibitem[Bressan and Han, 2011a]{BH} Bressan, A.,  Han, K., 2011a. Optima and Equilibria for a model of traffic flow. SIAM Journal on Mathematical Analysis  43 (5), 2384-2417. 

\bibitem[Bressan and Han, 2011b]{BH1} Bressan, A.,   Han, K., 2011b. Nash Equilibria for a Model of Traffic Flow with Several Groups of Drivers.  ESAIM: Control, Optimization and Calculus of Variations, doi:10.1051/cocv/2011198.


\bibitem[Claudel and Bayen, 2011]{Convex}
Claudel, C.G., Bayen, A.M., 2011. Convex formulations of data assimilation problems for a class  of Hamilton-Jacobi equations. SIAM Journal on Control and Optimization   49  (2), 383-402. 



\bibitem[Claudel and Bayen, 2010a]{CC1} Claudel, C.G., Bayen, A.M., 2010. Lax-Hopf based incorporation of internal boundary conditions into Hamilton-Jacobi equation. Part I: Theory. IEEE Transactions on Automatic Control  55 (5), 1142-1157.


\bibitem[Claudel and Bayen, 2010b]{CC2} Claudel, C.G., Bayen, A.M., 2010. Lax-Hopf based incorporation of internal boundary conditions into Hamilton-Jacobi equation. Part II: Computational methods. IEEE Transactions on Automatic Control   55 (5),  1158-1174.


\bibitem[Courant and Friedrichs, 1948]{CF} Courant, R., Friedrichs, K.O., 1948. Supersonic flows and shock waves. Pure and Applied Mathematics 1. 



\bibitem[Daganzo, 2005]{HJDaganzo} Daganzo, C.F., 2005. A variational formulation of kinematic waves: Basic theory and complex boundary conditions. Transportation Research Part B  39 (2), 187-196.



\bibitem[Daganzo, 2006]{HJDaganzo2} Daganzo, C.F., 2006. On the variational theory of traffic flow: well-posedness, duality and applications.  Networks and Heterogeneous Media 1 (4), 601-619.



\bibitem[Evans, 2010]{Evans} Evans, L.C., 2010. Partial Differential Equations. $2^{nd}$ edition. American Mathematical Society, Providence, RI. 




\bibitem[Frankowska, 1993]{Frankowska} Frankowska, H., 1993. Lower semicontinuous solutions of Hamilton-Jacobi-Bellman equations. SIAM Journal on Control and Optimization  31 (1), 257-272.


\bibitem[Friesz et al., 2012]{LWRDUE}
Friesz, T.L., Han, K., Neto, P.A., Meimand, A.,  Yao, T., 2012. Dynamic user equilibrium based on a hydrodynamic model. Transportation Research Part B, doi:10.1016/j.trb.2012.10.001.



\bibitem[Godunov, 1959]{Godunov} Godunov, S.K., 1959. A difference scheme for numerical solution of discontinuous solution of hydrodynamic equations. Math Sbornik 47 (3), 271-306.





\bibitem[Herrera et al., 2009]{MC} Herrera, J.C., Work, D.B., Herring, R., Ban, X.J., Jacobson, Q., Bayen, A.M., 2009. Evaluation of traffic data obtained via GPS-enabled mobile phones: The Mobile Century field experiment, Transportation Research Part C 18 (4), 568-583.



\bibitem[Leclercq et al., 2007]{Leclercq} Leclercq, L., Laval, J., Chevallier, E., 2007. The Lagrangian coordinate system and what it means for first order traffic flow models. In Proceedings of the $17^{th}$ International Symposium on Transportation and Traffic Theory, London.




\bibitem[Le Floch, 1988]{LF} Le Floch, P., 1988. Explicit formula for scalar non-linear conservation laws with boundary condition. Mathematical Models and Applied Sciences 10, 265-287.




\bibitem[LeVeque, 1992]{LeVeque} LeVeque, R.J., 1992. Numerical Methods for Conservation Laws. Birkh\"auser.




\bibitem[Lighthill and Whitham, 1955]{LW} Lighthill, M.,   Whitham, G., 1955. On kinematic waves. II. A theory of traffic flow on long crowded roads. In Proceedings of the Royal Society of London. Series A, Mathematical and Physical Sciences 229 (1178),  317-345.



\bibitem[Mazare et al., 2011]{MCB} Mazare, P.E., Dehwah, A.H., Claudel, C.G., Bayen, A.M., 2011. Analytical and grid-free solutions to the Lighthill-Whitham-Richards traffic flow model. Transportation Research Part B 45 (10), 1727-1748.





\bibitem[Moskowitz, 1965]{Moskowitz} Moskowitz, K., 1965. Discussion of `freeway level of service as influenced by volume and capacity characteristics' by D.~R.~Drew and C.~J.~Keese. Highway Research Record  99, 43-44.

\bibitem[Newell, 1993]{Newell} Newell, G.F., 1993.  A simplified theory of kinematic waves in highway traffic, part I: General theory. Transportation Research Part B 27 (4), 281-287.


\bibitem[Richards, 1956]{Richards} Richards, P.I., 1956. Shockwaves on the highway. Operations Research  4 (1), 42-51.



\bibitem[Wagner, 1987]{Wagner} Wagner, D. 1987. Equivalence of the Euler and Lagrangian equations of gas dynamics for weak solutions. Journal of Differential Equations  68 (1), 118-136.


\bibitem[Yuan et al., 2011]{Yuan} Yuan, Y., Van Lint, J.W.C., Hoogendoorn, S.P., Vrancken, J.L.M., Schreiter, T., 2011. Freeway traffic state estimation using Extended Kalman Filter for first-order traffic model in Lagrangian Coordinates. 2011 International Conference on Networking, Sensing and Control, Netherlands.
 

\end{thebibliography}
\end{document}